\newtheorem{thm}{Theorem}[section]
\newtheorem{lem}[thm]{Lemma}
 \newtheorem*{conj}{Conjecture}                                                                               
\newtheorem*{notation}{Notation}
\theoremstyle{definition}
\newtheorem*{defin}{Definition}
\DeclareMathOperator{\dist}{dist}
\newcommand\rr{\mathbb{R}}
\begin{document}

\title[PMT for Lipschitz metrics]{A positive mass theorem for Lipschitz metrics with small singular sets}
\author{Dan A.\ Lee}
\thanks{This work was partially supported by NSF DMS \#0903467 and a PSC CUNY Research Grant.}
\address{CUNY Graduate Center and Queens College}
\email{dan.lee@qc.cuny.edu}
\maketitle

\begin{abstract}
We prove that the positive mass theorem applies to Lipschitz metrics as long as the singular set is low-dimensional, with no other conditions on the singular set.  More precisely, let $g$ be an asymptotically flat Lipschitz metric on a smooth manifold $M^n$, such that $n<8$ or $M$ is spin.  As long as $g$ has bounded $C^2$ norm and nonnegative scalar curvature on the complement of some singular set $S$ of Minkowski dimension less than $n/2$, the mass of $g$ must be nonnegative.  We conjecture that the dimension of $S$ need only be less than $n-1$ for the result to hold.  These results complement and contrast with earlier results of H.~Bray~\cite{Bray:2001}, P.~Miao \cite{Miao:2002}, and Y.~Shi and L.-F.~Tam \cite{Shi-Tam:2002}, where $S$ is a hypersurface.
\end{abstract}

\section{Introduction}
In the study of nonnegative scalar curvature, one would like to formulate some notion of ``weak'' nonnegative scalar curvature for metrics that are not necessarily smooth.   The gold standard for a notion of weak nonnegative scalar curvature would be something like Alexandrov spaces as a weak notion of spaces with lower bounds on sectional curvature.   A good notion of weak nonnegative scalar curvature would be one that implies the same consequences as ``classical'' nonnegative scalar curvature---for example, the positive mass theorem in the asymptotically flat case, or topological restrictions in the compact case. 

An important theorem in this direction was proved by P.~Miao \cite{Miao:2002}, generalizing an earlier result of H.~Bray \cite[Section 6]{Bray:2001}.  See also \cite[Section~3]{Shi-Tam:2002} for the spin case, as well as a more recent proof by D.~McFeron and G.~Sz\'{e}kelyhidi~\cite{McFeron-Szekelyhidi}.  

\begin{thm}\label{hypersurface}
Let $M^n$ be a smooth manifold such that $n<8$ or $M$ is spin.\footnote{The only point of this assumption is to make sure that the classical positive mass theorem is valid on $M$.  If the positive mass theorem is true in all dimensions, then this hypothesis can safely be eliminated}  Let $S$ be a smooth closed hypersurface in $M$, and let $g$ be a complete asymptotically flat metric on $M$ such such that $g$ is $C^2$ \emph{up to} $S$ from each side of it (but not necessarily \emph{across} it) and $C^{2,\alpha}_\mathrm{loc}$ away from $S$.  

Near each point of $S$, $S$ divides $M$ into two sides, which we will call $A$ and $B$.  Let $H_A$ be the mean curvature vector of $S$ as computed by the metric on the $A$ side, and similarly define $H_B$.

If $g$ has nonnegative scalar curvature on the complement of $S$, and at each point of~$S$, $H_A-H_B$ either points toward side $A$ or is zero, then the mass of $g$ is nonnegative in each end.
\end{thm}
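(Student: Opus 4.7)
The plan is to approximate $g$ by a family of smooth metrics $g_\epsilon$ by mollifying in a tubular neighborhood of $S$, conformally deform each $g_\epsilon$ to a metric $\tilde g_\epsilon$ of zero scalar curvature with the same asymptotic end, apply the classical positive mass theorem (valid by the hypothesis $n<8$ or $M$ spin) to $\tilde g_\epsilon$, and send $\epsilon \to 0$.

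First I would fix Fermi coordinates $(s,y)$ near $S$, with $s>0$ on the $A$ side, and set $g_\epsilon$ to be the metric whose components in these coordinates are obtained by convolving those of $g$ in the $s$-variable with a standard mollifier of width $\epsilon$, glued smoothly to $g$ itself outside a small tubular neighborhood. Away from $S$ one has $g_\epsilon \equiv g$, and so $R_{g_\epsilon} \ge 0$ there. Inside the $\epsilon$-tube, the discontinuity of $\partial_s g_{ij}$ across $S$ contributes a term in $R_{g_\epsilon}$ of size $O(1/\epsilon)$ concentrated on $\{|s|<\epsilon\}$. A direct computation in Fermi coordinates shows that, integrated across the normal direction, this contribution converges as $\epsilon \to 0$ to the distributional scalar curvature of $g$ at $S$, which up to a positive constant equals the jump encoded by pairing $H_A - H_B$ with the unit normal into $A$. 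The mean curvature hypothesis is precisely that this quantity is nonnegative, so $R_{g_\epsilon}$ behaves like a nonnegative bump near $S$ plus the nonnegative scalar curvature of $g$ elsewhere; in particular $\int_M R^-_{g_\epsilon}\,dV_{g_\epsilon} \to 0$ uniformly.

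Next, I would solve the conformal Laplace equation
\[
-\tfrac{4(n-1)}{n-2}\Delta_{g_\epsilon} u_\epsilon + R_{g_\epsilon} u_\epsilon = 0, \qquad u_\epsilon \to 1 \text{ at infinity,}
\]
via standard weighted-Sobolev elliptic theory on asymptotically flat manifolds. The uniform smallness of $R^-_{g_\epsilon}$ in $L^1$ forces $u_\epsilon = 1 + v_\epsilon$ with $v_\epsilon \to 0$ in $C^0$ (in particular $u_\epsilon>0$) and with the asymptotic coefficient $A_\epsilon$ in the expansion $v_\epsilon = A_\epsilon|x|^{2-n} + o(|x|^{2-n})$ tending to $0$. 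Then $\tilde g_\epsilon := u_\epsilon^{4/(n-2)} g_\epsilon$ is a smooth asymptotically flat metric with $R_{\tilde g_\epsilon}=0$, so the classical positive mass theorem yields $m(\tilde g_\epsilon) \ge 0$. Since $m(\tilde g_\epsilon)$ differs from $m(g_\epsilon)$ only by an explicit constant multiple of $A_\epsilon$, which vanishes in the limit, and $m(g_\epsilon) \to m(g)$ (the mollification is trivial near infinity), one concludes $m(g) \ge 0$.

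\textbf{Main obstacle.} The crux is the uniform control of the conformal factor $u_\epsilon$, which reduces entirely to the $L^1$ control of $R^-_{g_\epsilon}$. This is where the mean curvature hypothesis enters decisively: without the sign on $H_A - H_B$, the mollification would create an unsigned scalar-curvature spike on $S$ whose negative part need not have small $L^1$ norm, the conformal factor could drift far from $1$ or become zero, and the limit argument would collapse.
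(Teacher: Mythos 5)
First, a point of orientation: the paper does not prove Theorem \ref{hypersurface} at all. It is quoted as a known result of Bray, Miao, and Shi--Tam \cite{Bray:2001, Miao:2002, Shi-Tam:2002}, and the paper's own contribution (Theorems \ref{maintheorem} and \ref{w1p}) adapts Miao's smoothing-plus-conformal-deformation technique to lower-dimensional singular sets. So there is no internal proof to compare against; what you have written is essentially Miao's original argument, which is the right template. The one genuinely load-bearing computation --- that mollifying across the corner produces a scalar-curvature spike whose leading $O(1/\epsilon)$ term is an approximate delta function on $S$ weighted by the pairing of $H_A-H_B$ with the unit normal into $A$ --- is asserted rather than carried out, and it is the heart of the theorem; in a full proof this Fermi-coordinate computation must be done explicitly.

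Two points in your conformal step need repair, though neither is fatal. (i) The smallness that makes the conformal Laplacian coercive, and hence makes the problem solvable with $u_\epsilon>0$, is smallness of $\|R^-_{g_\epsilon}\|_{L^{n/2}}$ (via the Sobolev inequality), not $L^1$ smallness; here it is available because $R^-_{g_\epsilon}$ is uniformly bounded and supported in a set of measure $O(\epsilon)$ --- the $O(1/\epsilon)$ spike is the \emph{positive} part. (ii) As written you solve the equation with the full $R_{g_\epsilon}$, whose positive spike has $L^1$ norm of order $1$ and $L^{n/2}$ norm of order $\epsilon^{\frac{2}{n}-1}\to\infty$; consequently $v_\epsilon=u_\epsilon-1$ does \emph{not} tend to $0$ and the coefficient $A_\epsilon$ does \emph{not} tend to $0$ in general. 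The argument survives because of a sign: integrating the equation over $M$ shows that $A_\epsilon$ equals a negative constant times $\int_M R_{g_\epsilon}u_\epsilon\,dV$, which is bounded below by $-C\epsilon\sup u_\epsilon$, so $A_\epsilon\le o(1)$; since $m(\tilde g_\epsilon)=m(g_\epsilon)+cA_\epsilon$ with $c>0$, the positive mass theorem for $\tilde g_\epsilon$ still gives $m(g)=m(g_\epsilon)\ge -cA_\epsilon\ge -o(1)$. Alternatively --- and this is what Miao actually does --- replace $R_{g_\epsilon}$ in the equation by its negative part $\min(R_{g_\epsilon},0)$: then $R_{\tilde g_\epsilon}\ge 0$ rather than $=0$, the source term is genuinely small in $L^{n/2}$, and your claims $u_\epsilon\to1$ and $A_\epsilon\to0$ become correct as stated. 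With either repair the proof closes.
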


Note that the hypotheses of the theorem above require $g$ to be Lipschitz everywhere.  One way to interpret this theorem is that when the singular set of $g$ is a hypersurface $S$ whose induced metric is well-defined regardless of which ``side'' of $S$ one uses to compute it, then the correct notion of weak nonnegative scalar curvature on $S$ is the pointwise mean curvature comparison condition that appears as a hypothesis of the theorem above.  

In this article we consider singular sets of lower dimension and ponder what conditions on $S$ correspond to weak nonnegative scalar curvature.  We find that if $S$ has low enough dimension, then no further conditions are needed.
\begin{thm}\label{maintheorem}
Let $M^n$ be a smooth manifold such that $n<8$ or $M$ is spin.  Let $g$ be a complete asymptotically flat Lipschitz metric on $M$, and let $S$ be a bounded subset whose $n/2$-dimensional lower Minkowski content is zero.
If $g$ has bounded $C^2$-norm and nonnegative scalar curvature on the complement of $S$, then the mass of $g$ is nonnegative in each end.  
\end{thm}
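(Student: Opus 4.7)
The idea is to mollify $g$ to obtain smooth asymptotically flat metrics $g_\epsilon$, then conformally deform each to a scalar-flat metric, allowing the classical smooth positive mass theorem to be applied to the deformed metric. The $n/2$-dimensional Minkowski content hypothesis is precisely what is needed to make the conformal deformation well-behaved in the limit.

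\textbf{Step 1 (smooth approximation).} I would mollify $g$ at scale $\epsilon$ in local charts covering a bounded neighborhood of $S$, using a cutoff so that the resulting smooth metric $g_\epsilon$ coincides with $g$ outside a fixed bounded set; then $g_\epsilon$ is asymptotically flat with $m(g_\epsilon)=m(g)$. Since $g$ is Lipschitz, one has $|\nabla^2 g_\epsilon|\le C/\epsilon$, so $|R_{g_\epsilon}|\le C/\epsilon$ on $N_\epsilon(S)$, while $R_{g_\epsilon}\to R_g\ge 0$ locally uniformly on $M\setminus S$. Because the lower $n/2$-dimensional Minkowski content of $S$ vanishes, I can select $\epsilon_k\to 0$ with $\vol(N_{\epsilon_k}(S))=o(\epsilon_k^{n/2})$. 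Writing $R_k:=R_{g_{\epsilon_k}}$ and $R_k^-:=\max(-R_k,0)$, this yields the critical estimate $\|R_k^-\|_{L^{n/2}(M)}\to 0$, and more crudely $\int_{N_{\epsilon_k}(S)}|R_k|\,dV=o(\epsilon_k^{n/2-1})\to 0$ for $n\ge 3$.

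\textbf{Step 2 (scalar-flat conformal deformation).} Let $c_n:=4(n-1)/(n-2)$ and $L_k:=-c_n\Delta_{g_{\epsilon_k}}+R_k$. The Sobolev inequality gives
\begin{equation*}
\int_M R_k^- v^2\,dV \;\le\; C_{\mathrm{Sob}}\,\|R_k^-\|_{L^{n/2}}\,\|\nabla v\|_{L^2}^2,
\end{equation*}
so the smallness from Step 1 makes $L_k$ coercive for large $k$ on the standard weighted Sobolev space $H^2_{-\delta}$ for AF manifolds, hence an isomorphism onto its target. I would solve $L_k u_k = 0$ with $u_k\to 1$ at infinity; a Moser/De~Giorgi iteration exploiting the critical $L^{n/2}$ smallness provides positivity of $u_k$ and uniform control, and weighted elliptic analysis yields the asymptotic expansion $u_k(x)=1+A_k|x|^{2-n}+o(|x|^{2-n})$. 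The conformal metric $\tilde g_k:=u_k^{4/(n-2)}g_{\epsilon_k}$ is then smooth, complete, asymptotically flat, and scalar-flat.

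\textbf{Step 3 (mass comparison, classical PMT, and obstacle).} The standard conformal mass formula gives $m(\tilde g_k)=m(g)+2A_k$, and integrating $c_n\Delta u_k=R_k u_k$ over an expanding ball yields $A_k=-\frac{1}{c_n(n-2)\omega_{n-1}}\int_M R_k u_k\,dV$ as an improper integral. The contribution from $N_{\epsilon_k}(S)$ is $o(1)$ by Step 1, and the contribution from the exterior region (where $R_{g_{\epsilon_k}}=R_g\ge 0$) is controlled by weighted Sobolev estimates on $u_k$, so $A_k\to 0$. Applying the classical positive mass theorem---valid for $\tilde g_k$ since $n<8$ or $M$ is spin---gives $m(\tilde g_k)\ge 0$, and passing to the limit gives $m(g)\ge 0$. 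The main obstacle is the conformal deformation of Step 2: the coefficient $R_k$ is not small in $L^\infty$ (it is of size $1/\epsilon_k$ on the singular region) but small only in the critical Lebesgue norm $L^{n/2}$, and producing a positive conformal factor with a clean asymptotic expansion at this borderline regularity requires careful elliptic analysis at the critical Sobolev exponent. This is precisely what pins down the exponent $n/2$ in the Minkowski-content hypothesis, and it is the reason the easier case where $S$ is a hypersurface (Minkowski dimension $n-1$) requires additional geometric assumptions.
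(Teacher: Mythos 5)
Your proposal follows essentially the same route as the paper: mollify at scale $\epsilon$ so that the Lipschitz bound gives $|\partial\partial g_\epsilon|\le C/\epsilon$ and hence $|R_{g_\epsilon}|\le C/\epsilon$, use the vanishing $n/2$-dimensional lower Minkowski content to get $\int_{S_{2\epsilon}}|R_{g_\epsilon}|^{n/2}=o(1)$ along a subsequence $\epsilon_k\to 0$, and then invoke Miao's conformal deformation together with the classical positive mass theorem. The one point where your version needs slightly more care is the smoothing itself: the paper mollifies at a variable scale $\sigma(x)$ that equals $\epsilon$ on $S_\epsilon$ and vanishes outside $S_{2\epsilon}$, so that $g_\epsilon=g$ (and hence $R_{g_\epsilon}=R_g\ge 0$) exactly off $S_{2\epsilon}$, whereas with your fixed cutoff you must also show that $R_{g_\epsilon}^-$ is small in $L^{n/2}$ on the transition region between $S_{2\epsilon}$ and the fixed bounded set --- this does hold by dominated convergence, since $\partial\partial g$ is bounded there and the mollified second derivatives converge almost everywhere, but your stated justification (locally uniform convergence of $R_{g_\epsilon}$ up to $S$) is a bit too strong given that $g$ is only assumed to have bounded $C^2$-norm off $S$.
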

See Section \ref{definitions} for the definition of Minkowski content.  For now, recall that Minkowski content equals Hausdorff measure for well-behaved sets (\textit{e.g.} submanifolds).

There is also a $W^{1,p}$ version of this theorem.
\begin{thm}\label{w1p}
Let $M^n$ be a smooth manifold such that $n<8$ or $M$ is spin.  Let $p>n$, let $g$ be a complete asymptotically flat $W^{1,p}_{\mathrm{loc}}$ (and hence continuous) metric on $M$, and let $S$ be a bounded subset whose $\frac{n}{2}(1-\frac{n}{p})$-dimensional lower Minkowski content is zero.  If $g$ has bounded $C^2$-norm and nonnegative scalar curvature on the complement of $S$, then the mass of $g$ is nonnegative in each end.  
\end{thm}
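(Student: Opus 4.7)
My plan is to mimic the strategy used to prove Theorem~\ref{maintheorem}, with the mollification estimates adjusted to reflect that $g$ now only lies in $W^{1,p}_{\mathrm{loc}}$. First I would produce a smooth approximation $g_\varepsilon = \eta_\varepsilon \ast g$, modified by a cutoff so that $g_\varepsilon = g$ outside a fixed bounded neighborhood of $S$; in particular $m(g_\varepsilon) = m(g)$. Since $g \in W^{1,p}$ with $p > n$, Young's inequality applied to $Dg_\varepsilon = \eta_\varepsilon \ast Dg$ and $D^2 g_\varepsilon = (D\eta_\varepsilon)\ast Dg$ yields
$$\|Dg_\varepsilon\|_{L^\infty} \lesssim \varepsilon^{-n/p}, \qquad \|D^2 g_\varepsilon\|_{L^\infty} \lesssim \varepsilon^{-1-n/p}.$$
Because $p > n$ forces $\varepsilon^{-1-n/p}$ to dominate $\varepsilon^{-2n/p}$, the second-derivative term dominates the squared first-derivative term in the scalar curvature formula, so $|R(g_\varepsilon)| \lesssim \varepsilon^{-1-n/p}$ on the $\varepsilon$-tube $S_\varepsilon := \{x : \dist(x,S) < \varepsilon\}$. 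Outside $S_\varepsilon$ the mollifier samples only the smooth part of $g$, so $g_\varepsilon \to g$ in $C^2_{\mathrm{loc}}(M \setminus S)$ and $R(g_\varepsilon)_- \to 0$ uniformly on compact subsets of $M \setminus S$; outside a fixed ball $R(g_\varepsilon)_-$ vanishes entirely.

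The crucial observation is that the exponent $d = \tfrac{n}{2}(1 - \tfrac{n}{p})$ in the hypothesis is chosen precisely so that $\|R(g_\varepsilon)_-\|_{L^{n/2}(M)}$ becomes small along a subsequence. The lower-Minkowski-content hypothesis produces $\varepsilon_k \to 0$ with $\vol(S_{\varepsilon_k}) = o(\varepsilon_k^{n-d})$, so
$$\int_{S_{\varepsilon_k}} |R(g_{\varepsilon_k})|^{n/2}\, dV \lesssim \varepsilon_k^{-\frac{n}{2}\left(1+\frac{n}{p}\right)} \cdot o\!\left(\varepsilon_k^{\frac{n}{2}+\frac{n^2}{2p}}\right) = o(1),$$
the two exponents $-\tfrac{n}{2}-\tfrac{n^2}{2p}$ and $\tfrac{n}{2}+\tfrac{n^2}{2p}$ cancelling exactly. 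Combined with the uniform control on $M \setminus S_{\varepsilon_k}$, this yields $\|R(g_{\varepsilon_k})_-\|_{L^{n/2}(M)} + \|R(g_{\varepsilon_k})_-\|_{L^1(M)} \to 0$ as $k \to \infty$.

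From here I would close the argument by a standard conformal deformation to zero scalar curvature. For $k$ large, solve
$$-c_n \Delta_{g_{\varepsilon_k}} u_k + R(g_{\varepsilon_k})\, u_k = 0, \qquad u_k \to 1 \text{ at infinity},$$
obtaining a positive solution $u_k$ with $u_k \to 1$ in weighted Sobolev norms via the weighted Fredholm theory for the conformal Laplacian, which is applicable because $\|R\|_{L^{n/2}}$ is small. Then $\tilde g_k := u_k^{4/(n-2)} g_{\varepsilon_k}$ is smooth, asymptotically flat, and scalar-flat, so the classical positive mass theorem (using $n < 8$ or the spin hypothesis) yields $m(\tilde g_k) \geq 0$. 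The standard conformal mass formula expresses $m(g_{\varepsilon_k}) - m(\tilde g_k)$ as a positive multiple of $\int R(g_{\varepsilon_k})\, u_k\, dV_{g_{\varepsilon_k}}$, which is bounded below by $-\|R(g_{\varepsilon_k})_-\|_{L^1(M)}\,\|u_k\|_{L^\infty(M)} = o(1)$; combining with $m(g_{\varepsilon_k}) = m(g)$ and sending $k \to \infty$ gives $m(g) \geq 0$. The main obstacle I anticipate is step three: verifying existence of a positive $u_k$ close to $1$ and justifying the mass formula rigorously, given that $R$ is only borderline in $L^{n/2}$ and the $C^2$ norm of $g_{\varepsilon_k}$ blows up as $\varepsilon_k^{-1-n/p}$ on $S_{\varepsilon_k}$.
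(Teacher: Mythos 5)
Your proposal is correct and follows essentially the same route as the paper: mollify at scale $\varepsilon$, use Young/H\"older to get $|\partial\partial g_\varepsilon|\lesssim \varepsilon^{-1-n/p}$ (with the squared first-derivative term dominated since $p>n$), observe that the exponent $\frac{n}{2}(1-\frac{n}{p})$ makes $\int_{S_\varepsilon}|R_{g_\varepsilon}|^{n/2}$ cancel to $o(1)$ along a subsequence, and then invoke Miao's conformal deformation plus the classical positive mass theorem. The one point needing slightly more care in your version is the fixed annular region where your cutoff makes $g_\varepsilon\neq g$ but which lies outside $S_\varepsilon$: it is neither covered by the tube estimate nor by ``uniform convergence on compact subsets of $M\smallsetminus S$,'' so you need an extra dominated-convergence (or uniform $C^2$-bound plus pointwise convergence) argument there, whereas the paper sidesteps this by mollifying at the variable scale $\sigma(x)$ so that $g_\varepsilon=g$ identically outside $S_{2\varepsilon}$.
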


It might seem surprising that one does not have to place any other conditions on the behavior of $g$ at singular set, but as we will see in the proof, the Lipschitz (or $W^{1,p}$) condition is very restrictive.  Essentially, $g$ is too regular for the scalar curvature to be truly singular on a small set.  We use the same technique as in \cite{Miao:2002}.  Note that if $S$ is a closed submanifold, the proofs of Theorems \ref{maintheorem} and \ref{w1p} are much simpler.

The dimensional restriction of $n/2$ seems to an unnecessary artifact of the conformal method used in the proof.  Also note that Theorems \ref{maintheorem} and \ref{w1p} do not include rigidity results for Euclidean space.
\begin{conj}
Let $M^n$ be a smooth manifold such that $n<8$ or $M$ is spin.  Let $g$ be a complete asymptotically flat Lipschitz metric on $M$, and let $S$ be a bounded subset whose $(n-1)$-dimensional lower Minkowski content is zero.
If $g$ has bounded $C^2$-norm and nonnegative scalar curvature on the complement of $S$, then the mass of $g$ is nonnegative in each end.  Moreover, if the mass of any end is zero, then $(M,g)$ must be isometric to Euclidean space.
\end{conj}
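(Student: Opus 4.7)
The plan is to adapt the mollification framework of Theorem \ref{maintheorem}, replacing its conformal-deformation step---whose need for $L^{n/2}$ control on $R^-$ is exactly what produces the dimensional threshold $n/2$---with an argument requiring only $L^1$ control. Let $g_\epsilon$ be a standard mollification of $g$ at scale $\epsilon$, asymptotically flat with $m(g_\epsilon)\to m(g)$. Away from the $\epsilon$-tubular neighborhood $N_\epsilon$ of $S$, we have $R_{g_\epsilon}\to R_g\ge 0$ locally uniformly; on $N_\epsilon$ the Lipschitz bound yields only $|R_{g_\epsilon}|\le C\epsilon^{-1}$. Because the $(n-1)$-dimensional lower Minkowski content of $S$ vanishes, $\vol(N_\epsilon)=o(\epsilon)$, and hence
\[
\int_M (R_{g_\epsilon})^-\, dV_{g_\epsilon}=o(1).
\]
This $L^1$ smallness is the sharp integrability available under the hypothesis, and turning it into nonnegative mass in the limit is the central obstacle---this is exactly where the conformal method used for Theorem \ref{maintheorem} breaks down.

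For the spin case, I would run Witten's argument on $(M,g_\epsilon)$: for each asymptotically constant spinor $\psi_0$ there is a harmonic spinor $\psi_\epsilon\to\psi_0$ yielding an identity of the form $c\cdot m(g_\epsilon)\,|\psi_0|^2 = \int_M\bigl(|\nabla \psi_\epsilon|^2+\tfrac14 R_{g_\epsilon}|\psi_\epsilon|^2\bigr)\, dV_{g_\epsilon}$ with $c>0$ a dimensional constant. The contribution of $R_{g_\epsilon}^-$ is bounded by $\tfrac14\|\psi_\epsilon\|_\infty^2\cdot\|R_{g_\epsilon}^-\|_{L^1}=o(1)$ provided one has a uniform $C^0$ bound on $\psi_\epsilon$. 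Establishing that bound under merely Lipschitz geometry---via elliptic regularity for the Dirac operator with Lipschitz coefficients, combined with an $L^1$ capacity-style cutoff near $N_\epsilon$ built out of the $(n-1)$-content hypothesis---is, in my view, the main technical step. For the non-spin case with $n<8$, the analogous strategy is a surgical modification of $g_\epsilon$ on $N_\epsilon$ to a smooth $\tilde g_\epsilon$ with $R_{\tilde g_\epsilon}\ge 0$ and $m(\tilde g_\epsilon)=m(g_\epsilon)+o(1)$, so that the classical Schoen-Yau positive mass theorem applies directly; alternatively, one can attempt to run the Ricci-flow smoothing of McFeron-Sz\'ekelyhidi~\cite{McFeron-Szekelyhidi} using the sharper $o(\epsilon)$ volume bound in place of their $L^{n/2}$ hypothesis. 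The budget of $o(1)$ to absorb $\|R_{g_\epsilon}^-\|_{L^1}=o(1)$ is sharp, which is why constructing $\tilde g_\epsilon$ is delicate.

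For rigidity, assume $m(g)=0$. In the spin case, equality in the Witten identity forces each $\psi_\epsilon$, for $\psi_0$ ranging over a basis of asymptotically constant spinors, to converge to a parallel spinor of $g$ on $M\setminus S$; this produces $n$ independent parallel spinors, so $g$ is flat off $S$. A removable-singularity argument for Lipschitz flat metrics---enabled by the connectedness of $M\setminus S$ and the sharp $(n-1)$-content hypothesis---then extends the flat structure across $S$, and asymptotic flatness identifies $(M,g)$ with Euclidean space. In the non-spin case one instead invokes the rigidity half of the classical positive mass theorem on $\tilde g_\epsilon$ (or its Ricci-flow-smoothed version) and passes to the limit, which I expect to require an additional compactness and regularity argument of its own.
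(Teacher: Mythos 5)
The statement you are addressing is presented in the paper as a \emph{conjecture}: the author offers no proof, remarking only that one might attack the spin case with a Witten-type spinor argument following Shi--Tam, or attempt the general case via Ricci flow following McFeron--Sz\'ekelyhidi. Your plan correctly diagnoses why the paper's own method is stuck at dimension $n/2$ (the conformal deformation requires $\|R_{g_\epsilon}^-\|_{L^{n/2}}=o(1)$, whereas the $(n-1)$-content hypothesis yields only $\|R_{g_\epsilon}^-\|_{L^1}=o(1)$), and the routes you propose coincide with the ones the author himself suggests. But what you have written is a research program, not a proof, and the steps you yourself flag as "the main technical step" or "delicate" are precisely the ones that remain open.

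Concretely: (1) In the spin case, bounding $\int \tfrac14 R_{g_\epsilon}^-|\psi_\epsilon|^2$ by $\|R_{g_\epsilon}^-\|_{L^1}$ requires a sup bound on the harmonic spinors $\psi_\epsilon$ that is \emph{uniform in} $\epsilon$. The mollified metrics satisfy only $|\partial\partial g_\epsilon|\le C\epsilon^{-1}$ on $S_{2\epsilon}$, so the zeroth-order term in the Lichnerowicz formula blows up like $\epsilon^{-1}$ there, and standard elliptic estimates for the Dirac operator give bounds on $\psi_\epsilon$ that degenerate as $\epsilon\to 0$ near $S$; the "$L^1$ capacity-style cutoff" that would rescue this is not supplied, and it is not clear it exists under a mere $(n-1)$-content hypothesis. (2) In the non-spin case, the "surgical modification" of $g_\epsilon$ on the tube to a smooth metric with $R\ge 0$ and $o(1)$ mass change is asserted, not constructed; that construction \emph{is} the conjecture. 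The Ricci-flow alternative likewise relies on $L^{n/2}$-type smallness of the negative part of scalar curvature to control the flow and the mass, and you have not shown that the $o(\epsilon)$ volume bound can substitute for it. (3) The rigidity claim needs both a compactness argument producing parallel spinors for the singular limit metric and a removable-singularity theorem for flat Lipschitz metrics across a set of vanishing $(n-1)$-content; neither is proved. In short, your proposal is a sensible plan consistent with the author's own suggestions, but it does not close any of the gaps that make this statement a conjecture rather than a theorem.
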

One might try to prove the spin case of this conjecture using a spinor argument, following \cite{Shi-Tam:2002}.  Or one might try to prove the conjecture using Ricci flow as in \cite{McFeron-Szekelyhidi}.  One advantage of the Ricci flow method is that it is more likely to produce a rigidity result.

\section{Definitions}\label{definitions}
\begin{defin}
Let $g$ be a continuous Riemannian metric on a smooth manifold $M^n$ where $n\geq3$.  Then $(M,g)$ is an \emph{asymptotically flat manifold} if and only if there is a compact set $K\subset M$ such that $M\smallsetminus K$ is a disjoint union of ends, $E_\ell$, such that
each end is diffeomorphic to $\rr^n$ minus a ball, and in each of
these coordinate charts, the metric $g_{ij}$ is $C^2$ and satisfies 
\begin{align*}
g_{ij}&=\delta_{ij}+O(|x|^{-\sigma})\\ 
g_{ij,k}&=O(|x|^{-\sigma-1})\\
g_{ij,kl}&=O(|x|^{-\sigma-2})\\
 R_g&=O(|x|^{-\tau}), 
\end{align*} 
for some $\sigma>(n-2)/2$ and $\tau>n$, where the commas denote partial derivatives in the coordinate chart, and $R_g$ denotes the scalar curvature of~$g$.

We define the \emph{mass} of each end $E_\ell$ by the formula
\[m(E_\ell,g)={1\over
2(n-1)\omega_{n-1}}\lim_{\rho\to\infty}\int_{S_\rho}
\sum_{i,j=1}^n(g_{ij,i}-g_{ii,j})\nu_j d\mu,\]
 where $\omega_{n-1}$ is the area of the standard unit
$(n-1)$-sphere, $S_{\rho}$ is the coordinate sphere in $E_\ell$ of radius
$\rho$, $\nu$ is its outward unit normal, and $d\mu$ is the Euclidean area
element on $S_{\rho}$.  The mass is well-defined on each end of an asymptotically flat manifold.
\end{defin}

\begin{defin}
For a subset $S$ of a Riemannian manifold$(M^n, g)$, the \emph{$m$-dimensional lower Minkowski content} of $S$ is 
\[ \liminf_{\epsilon\to0} \frac{\mathcal{L}_g^n(S_\epsilon)}{\alpha_{n-m}\epsilon^{n-m}}\]
where $\mathcal{L}_g^n$ is Lebesgue measure with respect to $g$, $S_\epsilon$ is the $\epsilon$-neighborhood of $S$, and $\alpha_{n-m}$ is the volume of the unit ball in $\rr^{n-m}$.
\end{defin}
The $m$-dimensional lower Minkowski content provides an upper bound (up to constant) for $m$-dimensional Hausdorff measure, and they are the same for rectifiable sets (see \cite[Chapter 3.2]{Federer-book} for details).  In particular, the condition of zero Minkowski content in Theorems \ref{maintheorem} and \ref{w1p} is only slightly stronger than the condition of zero Hausdorff measure.

\section{Proof of Theorem \ref{maintheorem}}\label{main}

First, we briefly sketch out the proof, which is straightforward.  Choose $M^n$, $g$, and $S$ as in the statement of Theorem \ref{maintheorem}.  We mollify the metric $g$ to get a smooth metric $g_\epsilon$ in such a way that $g_\epsilon=g$ outside of the $2\epsilon$-neighborhood $S_{2\epsilon}$.  The precise smoothing of $g$ does not matter much.  The only important property of the smoothing is that, using the hypotheses on~$g$, we have that $g_\epsilon$, $g_\epsilon^{-1}$, and $\partial g_\epsilon$ are bounded independently of $\epsilon$, while $\partial\partial g_\epsilon=O(\epsilon^{-1})$, with respect to a particular atlas.  By the formula for the scalar curvature of $g_\epsilon$, it follows that $R_{g_\epsilon}=O(\epsilon^{-1})$.  The hypothesis about Minkowski content tells us (roughly) that the volume of $S_\epsilon$ is $o(\epsilon^{n/2})$.  Thus
\begin{equation}\label{goal}
\int_{S_{2\epsilon}} |R_{g_\epsilon}|^{n/2}\,dg=o(1).
\end{equation}
From there, a standard argument (as in \cite{Miao:2002}) tells us that we can conformally deform $g_\epsilon$ to have nonnegative scalar curvature, without changing the mass too much.  Applying the classical positive mass theorem to the new, smooth manifold of nonnegative scalar curvature, we find that the original manifold $(M,g)$ has mass greater than a small negative number that is $o(1)$ in $\epsilon$.  Taking the limit as $\epsilon$ approaches zero, the result follows.  In what follows, we describe an explicit smoothing that yields \eqref{goal}.

We choose a finite atlas $U_1,\ldots,U_N$ for $M$.  By asymptotic flatness and continuity of $g$, we can choose these $U_k$ so that $g$ is uniformly equivalent to the background Euclidean metric of each patch.  That is, on each coordinate patch, we have 
\[ C^{-1}\delta_{ij}\leq g_{ij}\leq C\delta_{ij} \]
as positive definite symmetric bilinear forms.  
We choose a partition of unity  $\psi_1,\ldots, \psi_N$ subordinate to this cover. 
On each patch $U_k$, we will define a smoothing $g^k_\epsilon$ of $g$ that is defined on the support of $\psi_k$, which we denote $U'_k$.   We then obtain a smoothing $g_\epsilon$ of $g$ by defining $g_\epsilon=\sum_{k=1}^N \psi_k g^k_\epsilon$.  

\begin{notation}In what follows, we will use a generic constant $C$ to mean some large number that may depend on $(M,g)$ and the choices of $U_k$ and $\psi_k$.  The only thing that will be important to us is that $C$ is independent of $\epsilon$.
\end{notation}
Given a coordinate patch $U_k$, we wish to define $g^k_\epsilon$.  Let $\varphi$ be a nonnegative smooth function supported on the unit ball in $\rr^n$ whose integral is $1$.  The standard way to smooth $g$ is to convolve it with 
\[\varphi_\epsilon(x) := \epsilon^{-n}\varphi(x/\epsilon).\]
  However, we want to smooth $g$ in such a way that it does not change $g$ away from a neighborhood of $S$.  In order to do that, we need the following simple lemma.  

\begin{lem}\label{sigma}
For each $\epsilon>0$, on each coordinate patch $U_k$, there exists a nonnegative smooth function $\sigma$  such that $\sigma=\epsilon$ on the Euclidean neighborhood $S_\epsilon$ of $S$ in $U_k$, and $\sigma=0$ outside $S_{2\epsilon}$, while $|\partial\sigma|\le 3$ and $|\partial\partial\sigma|\le C\epsilon^{-1}$ everywhere.
\end{lem}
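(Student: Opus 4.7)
The plan is to build $\sigma$ as a mollification of an explicit tent function in the Euclidean distance to $S$. Working in the coordinates of $U_k$, let $d(x)=\dist(x,S)$ denote Euclidean distance (in the chart) to $S$; this is $1$-Lipschitz and vanishes on $S$. Consider the piecewise-linear tent
\[
f(x) := \min\!\bigl(\max(7\epsilon/2 - 2\,d(x),\,0),\,\epsilon\bigr),
\]
which is $2$-Lipschitz and satisfies $f\equiv\epsilon$ on $\{d\le 5\epsilon/4\}$ and $f\equiv 0$ on $\{d\ge 7\epsilon/4\}$.

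Next, take a standard nonnegative mollifier $\varphi\in C^\infty_c(\rr^n)$ supported in $B_1(0)$ with $\int\varphi=1$, set $\varphi_{\epsilon/4}(x)=(\epsilon/4)^{-n}\varphi(4x/\epsilon)$, and define $\sigma := f * \varphi_{\epsilon/4}$ in the chart. For $\epsilon$ sufficiently small relative to the distance from $\spt\psi_k$ to $\partial U_k$, this convolution is well-defined on all of $\spt\psi_k$, and $\sigma$ is smooth and nonnegative. Because $d$ is $1$-Lipschitz, the Euclidean $(\epsilon/4)$-ball around any $x$ with $d(x)\le\epsilon$ is contained in $\{d\le 5\epsilon/4\}$, where $f\equiv\epsilon$; hence $\sigma\equiv\epsilon$ on $S_\epsilon$. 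Analogously, the $(\epsilon/4)$-ball around any $x$ with $d(x)\ge 2\epsilon$ lies in $\{d\ge 7\epsilon/4\}$, so $\sigma\equiv 0$ outside $S_{2\epsilon}$.

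For the derivative bounds, convolution with a probability measure preserves Lipschitz constants, giving $|\partial\sigma|\le 2 < 3$. For the Hessian, differentiating under the integral yields $\partial_i\partial_j\sigma = (\partial_i f)*(\partial_j\varphi_{\epsilon/4})$ almost everywhere (using $f\in W^{1,\infty}$), and
\[
|\partial\partial\sigma| \;\le\; \|\partial f\|_\infty\,\|\partial\varphi_{\epsilon/4}\|_{L^1} \;\le\; 2\cdot(4/\epsilon)\|\partial\varphi\|_{L^1} \;=\; C\epsilon^{-1}.
\]
There is no genuine obstacle here: the argument is a routine mollification-of-a-tent-function computation. The only points requiring any judgment are the choice of the intermediate levels $5\epsilon/4$ and $7\epsilon/4$ and the mollification scale $\epsilon/4$, which must leave enough safety margin so that the plateau values of $f$ survive mollification and so that the Lipschitz constant of $f$ stays strictly below $3$. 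A brief remark about making sense of Euclidean distance and convolution near $\partial U_k$ is handled by shrinking $\epsilon$ so that $\spt\psi_k$ stays at distance at least $\epsilon/4$ from $\partial U_k$.
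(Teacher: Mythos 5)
Your construction is correct and is essentially the same as the paper's: a piecewise-linear tent function of the Euclidean distance to $S$, mollified at scale comparable to $\epsilon$, with the plateau levels chosen so that the plateaus survive mollification and the slope stays below $3$. The only differences are cosmetic choices of constants (the paper uses slope $3$, levels $4\epsilon/3$ and $5\epsilon/3$, and mollification scale $\epsilon/6$) and the paper phrases the Hessian bound as a Lipschitz estimate on $\partial\sigma$ rather than via Young's inequality, which amounts to the same computation.
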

\begin{proof}
Define a continuous function 
\[s(x)=\left\{\begin{array}{ll}
\epsilon & \text{for }x\in S_{4\epsilon/3}\\
5\epsilon - 3\dist(x,S) & \text{for }x\in S_{5\epsilon/3}\smallsetminus S_{4\epsilon/3}\\
0 & \text{for }x\notin S_{5\epsilon/3}
\end{array}\right.\]
Then we can define 
\[\sigma(x)=\int_{\rr^n} s(x-y)\varphi_{\epsilon/6} (y)\,dy.\]
Clearly, $\sigma$ is a nonnegative smooth function such that $\sigma=\epsilon$ on $S_\epsilon$ and $\sigma=0$ outside $S_{2\epsilon}$.  We just need to check the bounds on derivatives.
\begin{align*}
|\sigma(x_1)-\sigma(x_2)| & \leq\int_{\rr^n} |s(x_1-y)-s(x_2-y)|\varphi_{\epsilon/6} (y)\,dy \\
&\leq \int_{\rr^n} 3 |x_1-x_2| \varphi_{\epsilon/6} (y)\,dy\\
&=3|x_1-x_2|,
\end{align*}
where we used the Lipschitz property of $s$ in the second line.  Thus $|\partial\sigma|\le 3$.

We know that 
\begin{align*}
 \partial\sigma(x)&= \int_{\rr^n} s(x-y) \partial\varphi_{\epsilon/6}(y)\,dy \\ 
&=\int_{\rr^n} s(x-y)  \left(\frac{6}{\epsilon}\right)^{n+1}  \partial\varphi\left(\frac{6y}{\epsilon}\right)\,dy 
\end{align*}
Arguing as above, 
\begin{align*}
|\partial\sigma(x_1)-\partial\sigma(x_2)| 
&\leq \int_\rr^n 3|x_1-x_2|    \left(\frac{6}{\epsilon}\right)^{n+1}    \left|\partial\varphi \left(\frac{6y}{\epsilon}\right)\right|\,dy \\
& = \frac{18}{\epsilon} |x_1-x_2| \int_{\rr^n} |\partial\varphi (y)|\,dy \\
&=\frac{C}{\epsilon}|x_1-x_2|.
\end{align*}
 Thus $|\partial\partial\sigma|\le C\epsilon^{-1}$.
\end{proof}

For small enough $\epsilon$, we define $g^k_\epsilon$ on the patch $U'_k$ by the formula
\[ (g^k_\epsilon)_{ij}(x)=\int_{\rr^n}g_{ij}(x-\sigma(x)y)\varphi(y)\,dy 
=\int_{\rr^n}g_{ij}(y)\varphi_{\sigma(x)}(x-y)\,dy.\]
Keep in mind that the function $\sigma$ described by the lemma above depends on the patch $U_k$, the singular set $S$, and $\epsilon$.  Clearly, each component of $g^k_\epsilon$ is smooth.  We now claim that $|\partial g^k_\epsilon|\le C$ and $|\partial\partial g^k_\epsilon|\le C\epsilon^{-1}$.  For ease of notation, let us prove these inequalities for each component, individually.  The lemma below proves the claim.

\begin{lem}\label{convolve}
Let $\sigma$ be the function described in Lemma \ref{sigma}, and let $f$ be a Lipschitz function on $U_k$ that has bounded $C^2$-norm on the complement of $S$.  For small enough $\epsilon$, if we define the function
\[ f_\epsilon(x)=\int_{\rr^n}f(x-\sigma(x)y)\varphi(y)\,dy 
=\int_{\rr^n}f(y)\varphi_{\sigma(x)}(x-y)\,dy\]
on the set $U'_k$, then $|\partial f_\epsilon|\le C$ and $|\partial\partial f_\epsilon|\le C\epsilon^{-1}$, where $C$ may depend on the supremum and Lipschitz constant of $f$.
\end{lem}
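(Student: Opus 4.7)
The plan is to work with the kernel formulation
\[f_\epsilon(x) = \int_{\rr^n} f(y)\, K(x,y)\, dy, \qquad K(x,y) := \sigma(x)^{-n}\varphi\!\left(\tfrac{x-y}{\sigma(x)}\right),\]
and to exploit the moment identities $\int K(x,y)\,dy = 1$ and (after choosing $\varphi$ radial, which we may) $\int y_k\, K(x,y)\,dy = x_k$. Differentiating these in $x$ produces $\int \partial_{x_i} K\,dy = 0$ and $\int y_k\, \partial_{x_j}\partial_{x_i} K\,dy = 0$, whence
\[\partial_{x_j}\partial_{x_i} f_\epsilon(x) = \int [f(y) - P(y)]\, \partial_{x_j}\partial_{x_i} K(x,y)\, dy\]
for any affine polynomial $P$, with $P$ free to depend on $x$. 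A routine computation using Lemma \ref{sigma} yields the pointwise kernel bounds $|\partial_{x_i} K| \leq C\sigma^{-n-1}$ and $|\partial_{x_j}\partial_{x_i} K| \leq C\sigma^{-n-2} + C\sigma^{-n-1}\epsilon^{-1}$ on a $y$-support of volume at most $C\sigma^n$.

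The first bound is then immediate: taking $P \equiv f(x)$ and using $|f(y)-f(x)| \leq L\sigma(x)$ on the support gives $|\partial f_\epsilon(x)| \leq C L$, where $L$ is the Lipschitz constant of $f$. For the second bound we split on the size of $\sigma(x)$. When $\sigma(x) = \epsilon$ (in particular whenever $x \in S_\epsilon$), again choosing $P \equiv f(x)$ combines with the kernel bound to give $|\partial^2 f_\epsilon(x)| \leq C L \epsilon \cdot \epsilon^{-n-2} \cdot \epsilon^n = C/\epsilon$. When $\sigma(x) = 0$, the function $\sigma$ vanishes on a neighborhood of $x$, so $f_\epsilon = f$ there; such $x$ lies outside $S_{2\epsilon} \supset S$, so the hypothesized $C^2$-bound on $f$ immediately gives $|\partial^2 f_\epsilon(x)| \leq C$.

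The remaining case $0 < \sigma(x) < \epsilon$ is where the argument requires a little cleverness, and we expect it to be the main obstacle: the naive estimate via $|f-P| \leq L\sigma$ gives only $|\partial^2 f_\epsilon(x)| \leq C/\sigma(x)$, potentially much worse than the desired $C/\epsilon$. The key observation is that, by construction, $\sigma \equiv \epsilon$ throughout $S_\epsilon$, so $\sigma(x) < \epsilon$ forces $\dist(x, S) > \epsilon > \sigma(x)$. Consequently the closed ball $\overline{B_{\sigma(x)}(x)}$ is disjoint from $S$, and $f$ is $C^2$ on it with uniformly bounded norm. Taking $P$ to be the first-order Taylor polynomial of $f$ at $x$ then yields $|f(y) - P(y)| \leq C|y-x|^2 \leq C\sigma(x)^2$ on the support, and combining with the kernel bound gives $|\partial^2 f_\epsilon(x)| \leq C\sigma^2 (\sigma^{-n-2} + \sigma^{-n-1}\epsilon^{-1}) \sigma^n = C(1 + \sigma(x)/\epsilon) \leq C$. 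Assembling the three cases gives $|\partial^2 f_\epsilon| \leq C\epsilon^{-1}$ everywhere, as required.
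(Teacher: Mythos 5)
Your argument is correct in substance but takes a genuinely different route from the paper's. The paper splits the domain into $S_\epsilon$, where $\sigma\equiv\epsilon$ and $f_\epsilon$ is the standard mollification (so a Lipschitz difference-quotient computation on $\partial f_\epsilon$ gives $|\partial\partial f_\epsilon|\le C\epsilon^{-1}$), and the complement of $S_\epsilon$, where it differentiates the unscaled formula $f_\epsilon(x)=\int f(x-\sigma(x)y)\varphi(y)\,dy$ directly by the chain rule: there $\dist(x,S)\ge\epsilon\ge\sigma(x)$, so the points $x-\sigma(x)y$ avoid $S$, the $C^2$ bound on $f$ applies, and the only factor of $\epsilon^{-1}$ comes from $\partial\partial\sigma$. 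Your intermediate case $0<\sigma(x)<\epsilon$ is exactly the paper's second region, and your Taylor-remainder argument with moment cancellation is a legitimate substitute for the chain rule there; both hinge on the same observation, namely that the mollification ball $\overline{B_{\sigma(x)}(x)}$ misses $S$ once $\sigma(x)<\epsilon$, so the $C^2$ hypothesis can be brought to bear. Your unified kernel formulation is arguably cleaner and makes transparent why nothing worse than $\epsilon^{-1}$ can appear. (You do not actually need $\varphi$ radial: without the first-moment condition one gets $\int (y_k-x_k)\,\partial_{x_j}\partial_{x_i}K\,dy=-c_k\,\partial_j\partial_i\sigma(x)=O(\epsilon^{-1})$, which, multiplied by $|\partial_k f(x)|\le L$, is still within the target bound.)

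One small repair is needed in your case $\sigma(x)=0$. It is not true that $\sigma(x)=0$ forces $\sigma$ to vanish on a neighborhood of $x$: at a boundary point of the zero set of $\sigma$ the kernel formulation degenerates and none of your three cases literally applies there. The fix is easy: the zero set of $\sigma$ lies well outside $S_\epsilon$ (outside $S_{11\epsilon/6}$ for the explicit $\sigma$ built in Lemma \ref{sigma}), so on a neighborhood of it one has $\dist(\cdot,S)>\sigma(\cdot)$ and one may differentiate $\int f(x-\sigma(x)y)\varphi(y)\,dy$ under the integral sign via the chain rule and the $C^2$ bound on $f$ --- precisely the paper's argument on the complement of $S_\epsilon$ --- obtaining $|\partial\partial f_\epsilon|\le C+C|\partial\partial\sigma|\le C\epsilon^{-1}$ there; alternatively, prove your bound on the open dense set $\{\sigma>0\}\cup\mathrm{int}\{\sigma=0\}$ and extend by continuity of $\partial\partial f_\epsilon$. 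You should also record that $0\le\sigma\le\epsilon$ everywhere (true for the construction in Lemma \ref{sigma}, though not stated in its conclusion), so that your three cases are exhaustive; and note that even if $\sigma(x)>\epsilon$ were possible, the crude Lipschitz estimate would give $CL/\sigma(x)\le CL/\epsilon$ there anyway.
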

\begin{proof}
On the complement of $S_\epsilon$, the result follows easily from differentiating the first formula for $f_\epsilon$ above and using the $C^2$ bound on $f$ and the bounds on $|\partial\sigma|$ and $|\partial\partial\sigma|$ from Lemma \ref{sigma}.  So we need only consider the region $S_\epsilon$.  But in this region we have $\sigma(x)=\epsilon$ by construction, and therefore
\[ f_\epsilon(x)
=\int_{\rr^n}f(y)\varphi_\epsilon(x-y)\,dy 
\]
is just the usual mollification formula.  Then since $f$ is Lipschitz, a standard computation shows that $|\partial f_\epsilon|$ is bounded.  Moreover, for $x\in S_\epsilon$,
\begin{align*}
\partial f_\epsilon (x)
&=\int_{\rr^n} f(y) \partial\varphi_\epsilon(x-y)\,dy\\
&=\int_{\rr^n} f(y) \epsilon^{-n-1}\partial\varphi\left(\frac{x-y}{\epsilon}\right)\,dy\\
&=\int_{\rr^n} f(x-\epsilon y) \epsilon^{-1}\partial\varphi(y)\,dy.
\end{align*}
So for any $x_1,x_2\in S_\epsilon$, 
\begin{align*}
|\partial f_\epsilon(x_1)-\partial f_\epsilon(x_2)|
&=\left| \int_{\rr^n} [f(x_1-\epsilon y)-f(x_2-\epsilon y)] \epsilon^{-1}\partial\varphi(y)\,dy\right|\\
&\le  \int_{\rr^n} |f(x_1-\epsilon y)-f(x_2-\epsilon y)| \epsilon^{-1}|\partial\varphi(y)|\,dy\\
&\le  \int_{\rr^n}C|x_1-x_2| \epsilon^{-1}|\partial\varphi(y)|\,dy\\
&\le \frac{C}{\epsilon}|x_1-x_2|.
\end{align*}
The result follows.
\end{proof}

Setting $g_\epsilon=\sum_{k=1}^N \psi_k g^k_\epsilon$, Lemma \ref{convolve} implies that in each coordinate chart~$U_k$, 
$|\partial (g_\epsilon)_{ij}|\le C$ and $|\partial\partial (g_\epsilon)_{ij}|\le C\epsilon^{-1}$ for some $C$ independent of~$\epsilon$.  From looking at how scalar curvature depends on the metric, it is clear that $|R_{g_\epsilon}|\le C\epsilon^{-1}$ for some $C$.  Meanwhile, $g=g_\epsilon$ outside $S_{2\epsilon}$, and by our assumption on the Minkowski content of $S$, we have 
\begin{align*}
\int_{S_{2\epsilon}} |R_{g_\epsilon}|^{n/2}\,dg &\le \mathcal{L}_g^n(S_{2\epsilon})\sup |R_{g_\epsilon}|^{n/2} \\
&= o(\epsilon^{n/2}) O(\epsilon^{-n/2})\\
&=o(1),
\end{align*}
which is our desired estimate \eqref{goal}.  The rest of the proof of Theorem \ref{maintheorem} proceeds exactly as in \cite{Miao:2002}.  (Technically, since we are using \emph{lower} Minkowski content, it is inaccurate to say that $\mathcal{L}_g^n(S_{2\epsilon})= o(\epsilon^{n/2})$, but the argument still works since we only need to use a subsequence of $\epsilon$'s approaching zero.  Also, we were careless about the distinction between defining $S_\epsilon$ using the metric $g$ versus the Euclidean metric on each chart, but by uniform equivalent of metrics, this sloppiness is inconsequential.)

\section{Proof of Theorem \ref{w1p}}

The proof of the $W^{1,p}$ version of Theorem \ref{maintheorem} requires only slight modification.  Choose $M^n$, $g$, $S$, and $p$ as in the statement of Theorem \ref{maintheorem}.  First, observe that because of the $C^2$ bounds on $g$, we can see that $|R_{g_\epsilon}|=O(\epsilon^{-1})$ on the complement of $S_\epsilon$, just as in the Lipschitz case, and we now have even better bounds on $\mathcal{L}^n_g(S_{2\epsilon})$, so we have
\[\int_{S_{2\epsilon}\smallsetminus S_\epsilon} |R_{g_\epsilon}|^{n/2}\,dg=o(1).\] 
  Therefore, in order to establish \eqref{goal}, it is sufficient to show that
\begin{equation}\label{newgoal}
\int_{S_\epsilon} |R_{g_\epsilon}|^{n/2}\,dg=o(1).
\end{equation}

Next we use a $W^{1,p}$ version of Lemma \ref{convolve}.
\begin{lem}\label{convolve-w1p}
Let $\sigma$ be the function described in Lemma \ref{sigma}, and let $f\in W^{1,p}_{\mathrm{loc}}(U_k)$ such that $f$ has bounded $C^2$-norm on the complement of $S$.  For small enough $\epsilon$, if we define the function
\[ f_\epsilon(x)=\int_{\rr^n}f(x-\sigma(x) y)\varphi(y)\,dy 
=\int_{\rr^n}f(y)\varphi_{\sigma(x)}(x-y)\,dy\]
on the set $U'_k$, then $\|\partial f_\epsilon\|_{L^p(S_\epsilon\cap U'_k)} \le C$ and 
$|\partial\partial f_\epsilon|\le C\epsilon^{-1-\frac{n}{p}}$, where $C$ may depend on the $W^{1,p}$ norm of $f$.
\end{lem}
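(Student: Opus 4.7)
The plan is to follow the same region decomposition used in Lemma \ref{convolve}, replacing pointwise bounds that came from the Lipschitz property of $f$ by $L^p$/H\"older estimates wherever the integrand can no longer be controlled uniformly. I would partition $U'_k$ into three zones according to the behavior of $\sigma$: on $U'_k\setminus S_{2\epsilon}$ one has $\sigma\equiv 0$ and $f_\epsilon=f$, so the hypothesized pointwise $C^2$ bound on $f$ immediately handles both derivative estimates there; on $S_\epsilon$ one has $\sigma\equiv\epsilon$ constant, so $f_\epsilon=f*\varphi_\epsilon$ is the standard mollification; and on the transition zone $S_{2\epsilon}\setminus S_\epsilon$ one has $\sigma(x)\in(0,\epsilon]$ together with $|\partial\sigma|\le 3$ and $|\partial^2\sigma|\le C\epsilon^{-1}$ from Lemma \ref{sigma}.

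On $S_\epsilon$, constancy of $\sigma$ lets me commute derivatives through the convolution, writing $\partial f_\epsilon=(\partial f)*\varphi_\epsilon$ and $\partial^2 f_\epsilon=(\partial f)*(\partial\varphi_\epsilon)$, where $\partial f\in L^p_{\mathrm{loc}}$ is the weak derivative. Young's inequality then yields
\[\|\partial f_\epsilon\|_{L^p(S_\epsilon\cap U'_k)}\le\|\partial f\|_{L^p(S_{2\epsilon})}\|\varphi_\epsilon\|_{L^1}\le C.\]
A pointwise H\"older estimate with conjugate exponent $p'=p/(p-1)$, combined with the scaling identity $\|\partial\varphi_\epsilon\|_{L^{p'}}=C\epsilon^{-1-n/p}$ (which comes from $\partial\varphi_\epsilon(\cdot)=\epsilon^{-n-1}(\partial\varphi)(\cdot/\epsilon)$), gives the claimed second-derivative bound $|\partial^2 f_\epsilon(x)|\le\|\partial f\|_{L^p}\|\partial\varphi_\epsilon\|_{L^{p'}}\le C\epsilon^{-1-n/p}$.

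On the remaining region $U'_k\setminus S_\epsilon$ the key geometric observation is that $\dist(x,S)\ge\epsilon\ge\sigma(x)$, so the closed integration ball $\overline{B_{\sigma(x)}(x)}$ meets $S$ only in a set of Lebesgue measure zero, and the pointwise $C^2$ bound on $f$ is available almost everywhere in the integrand. Differentiating $f_\epsilon(x)=\int f(x-\sigma(x)z)\varphi(z)\,dz$ twice under the integral produces terms linear in $\partial f$ and $\partial^2 f$ (evaluated at the shifted points $x-\sigma(x)z$) multiplied by factors built from $\partial\sigma$ and $\partial^2\sigma$; routine estimation using the $C^2$ bound on $f$ together with Lemma \ref{sigma} yields $|\partial f_\epsilon|\le C$ and $|\partial^2 f_\epsilon|\le C\epsilon^{-1}$ on this region, both of which are at least as strong as what the lemma asks for.

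The main obstacle, and the only genuine departure from the Lipschitz argument, is the $S_\epsilon$ bound on $\partial^2 f_\epsilon$: since $\partial f$ now lies only in $L^p$ rather than $L^\infty$, the difference-quotient argument that gave $|\partial^2 f_\epsilon|\le C\epsilon^{-1}$ in Lemma \ref{convolve} must be replaced by the H\"older pairing above, and this is precisely what forces the stated exponent to degrade from $\epsilon^{-1}$ to $\epsilon^{-1-n/p}$. One small point to verify carefully is that the relevant terms in the transition zone really are integrable against $\partial f$, $\partial^2 f$ a.e., which is immediate once one checks that $\overline{B_{\sigma(x)}(x)}\setminus S$ has full measure in $\overline{B_{\sigma(x)}(x)}$ whenever $x\notin S_\epsilon$.
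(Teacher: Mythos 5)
Your proof is correct and follows essentially the same route as the paper: on $S_\epsilon$ the map $f_\epsilon$ is the standard mollification, Young's inequality gives the $L^p$ bound on $\partial f_\epsilon$, and the H\"older pairing of the weak derivative $\partial f$ against $\partial\varphi_\epsilon$ produces exactly the exponent $\epsilon^{-1-n/p}$. The paper's proof of this lemma in fact only treats the region $S_\epsilon$ (the complement having already been disposed of via the $C^2$ bound, as in Lemma \ref{convolve}), so your explicit handling of the transition zone is a harmless addition rather than a divergence.
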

\begin{proof}
Recall that for $x\in S_\epsilon$, $\sigma(x)=\epsilon$, so that the formula for
$f_\epsilon$ is the usual mollification formula.  A standard argument using H\"{o}lder's inequality shows that
\[ \|\partial f_\epsilon\|_{L^p(S_\epsilon \cap U'_k)} \le \|\partial f\|_{L^p(S_{2\epsilon}\cap U_k)}\le C.\]
For any $x\in S_\epsilon$, if $q$ is chosen so that $\frac{1}{p}+\frac{1}{q}=1$, then
\begin{align*}
|\partial\partial f_\epsilon(x)|
&= \left|\partial\partial \int_{\rr^n} f(x-y)\varphi_\epsilon(y)\,dy\right|\\
&= \left|\partial\int_{\rr^n} \partial f(x-y)\varphi_\epsilon(y)\,dy\right|\\
&= \left|\partial\int_{\rr^n} \partial f(y)\varphi_\epsilon(x-y)\,dy\right|\\
&= \left|\int_{S_{2\epsilon}\cap U_k} \partial f(y) \partial\varphi_\epsilon(x-y)\,dy\right|\\
&\le \|\partial f\|_{L^p(S_{2\epsilon}\cap U_k)}  \left(\int_{B_\epsilon(x)} |\partial\varphi_\epsilon(x-y)|^q\,dy\right)^{\frac{1}{q}}\\
&\le C\left(\epsilon^{n} (\epsilon^{-n-1})^q\right)^{\frac{1}{q}}\\
&= C\epsilon^{-1-\frac{n}{p}}.
\end{align*}
\end{proof}
Since the scalar curvature is a contraction of $\partial\partial g + g^{-1}*g^{-1}*\partial g *\partial g$, on any of the coordinate patches, we can use Lemma \ref{convolve-w1p} to estimate
\begin{align*}
 \int_{S_\epsilon\cap U_k} |R_{g_\epsilon}|^{n/2}\,dg
 &\le  C\int_{S_\epsilon\cap U_k} |\partial\partial g_\epsilon|^{n/2}\,dx +  C\int_{S_\epsilon\cap U_k} |\partial g_\epsilon|^{n}\,dx \\
  &\le  \mathcal{L}^n(S_\epsilon\cap U_k) \left(C\epsilon^{-1-\frac{n}{p}}\right)^{n/2} + o(1)\\
  &= o\left(\epsilon^{\frac{n}{2}\left(1+\frac{n}{p}\right)}\right) \epsilon^{-\frac{n}{2}\left(1+\frac{n}{p}\right)}+o(1)\\
  &=o(1).
  \end{align*}
 As in Section \ref{main}, there is some justifiable carelessness in the computation above.  And once again, the result of the proof follows exactly as in \cite{Miao:2002}.

\bibliographystyle{hamsplain}
\bibliography{research2011}

\end{document}